\newtheorem{theorem}{Theorem}[section]
\newtheorem{lemma}{Lemma}[section]
\newtheorem{algorithm}{Algorithm}[section]
\newtheorem{proposition}{Proposition}[section]
\begin{document}
\title{A Multi-level Correction Scheme for Eigenvalue Problems 
\footnote{This work is supported in part by National Science Foundation of China (NSFC 11001259, 11031006, 2011CB309703).}}

\author{Qun Lin\footnote{LSEC, ICMSEC, Academy of Mathematics and Systems
Science, Chinese Academy of Sciences,  Beijing 100190, China
(linq@lsec.cc.ac.cn)} \   and\
Hehu Xie\footnote{LSEC, ICMSEC, Academy of Mathematics and Systems
Science, Chinese Academy of Sciences,  Beijing 100190, China
(hhxie@lsec.cc.ac.cn) } }
\date{Jan 23 2011 }
\maketitle

\thanks{}

\begin{abstract}
In this paper, a new type of multi-level correction scheme is
proposed for solving eigenvalue problems by finite element method.
With this new scheme, the accuracy of eigenpair approximations  can
be improved after each correction step which only needs to solve a
source problem on finer finite element space and an eigenvalue
problem on the coarsest finite element space.  This correction
scheme can improve the efficiency of solving eigenvalue problems by
finite element method.

%\date{January 1, 2001 and, in revised form, June 22, 2001.}

%\dedicatory{This paper is dedicated to our advisors.}

\vskip0.3cm {\bf Keywords.} Eigenvalue problem, multi-level correction scheme, finite
element method, multi-space, multi-grid. 

%    General info
\vskip0.2cm {\bf AMS subject classifications.} { 65N30, 65B99, 65N25, 65L15}
\end{abstract}

\section{Introduction}
The purpose of this paper is to propose a new type of multi-level
correction scheme based on finite element discretization to solve
eigenvalue problems. The two-grid method for solving eigenvalue
problems has been proposed and analyzed by Xu and Zhou in
\cite{XuZhou}.  The idea of the two-grid comes from \cite{Xu_Two_Grid, Xu_Nonlinear}
for nonsymmetric or indefinite problems and nonlinear elliptic equations. Since then,
there have existed many numerical
methods for solving eigenvalue problems based on the idea of
two-grid method (\cite{AndreevLazarovRacheva,ChenJiaXie,RachevaAndreev}).

In this paper, we present a new type of multi-level correction
scheme for solving eigenvalue problems. With the new proposed method
solving eigenvalue problem will not be much more difficult than the
solution of the corresponding source problem. Our method is some
type of operator iterative method (\cite{Lin,XuZhou,Zhou}). The
correction method for eigenvalue problem in this paper is based on a
series of finite element spaces with different approximation
properties which are related to the multilevel method (\cite{Xu}).

The standard Galerkin finite element method for eigenvalue problem
has been extensively investigated, e.g. Babu\v{s}ka and Osborn
\cite{Babuska2,BabuskaOsborn}, Chatelin \cite{Chatelin} and
references cited therein. Here we adopt some basic results in these
papers in our analysis. The finite element method for eigenvalue
problem has been developed
 well and many high efficient methods have also been proposed and analyzed for different types of eigenvalue problems
 (\cite{ChenJiaXie,ChenLin,JiaXieYinGao,LinLin,LinLu,LinYan,RachevaAndreev,XuZhou}).

The corresponding error estimates of the type of multi-level
correction scheme which is introduced here will be analyzed. Based
on the analysis, the new method can improve the convergence rate of
the eigenpair approximations after each correction step. The
multi-level correction procedure can be described as follows: (1)\
solve the eigenvalue problem in the coarsest finite element space;
(2)\ solve an additional source problem in an augmented space using
the previous obtained eigenvalue multiplying the corresponding
eigenfunction as the load vector; (3)\ solve eigenvalue problem
again on the finite element space which is constructed by combining
the coarsest finite element space with the obtained eigenfunction
approximation in step (2). Then go to step (2) for next loop.

Similarly to \cite{XuZhou}, in order to describe our method clearly,
we give the following simple Laplace eigenvalue problem to
illustrate the main idea in this paper with multi-grid
implementation way (see section 5).

Find$(\lambda,u)$ such that
\begin{equation}\label{problem}
\left\{
\begin{array}{rcl}
-\Delta u&=&\lambda u,\quad{\rm in}\ \Omega,\\
u&=&0,\quad\ \  {\rm on}\ \partial\Omega,\\
\int_{\Omega} u^2d\Omega&=&1,
\end{array}
\right.
\end{equation}
where $\Omega\subset\mathcal{R}^2$ is a bounded domain with
Lipschitz boundary $\partial\Omega$ and $\Delta$ denote the Laplace
operator.

Let $V_H$ denote the coarsest linear finite element defined on the
coarsest mesh $\mathcal{T}_H$. Additionally, we also need to
construct a series of finite element spaces $V_{h_2}$, $ V_{h_3}$,
$\cdots$, $V_{h_n}$ which are defined on the corresponding series of
meshes $\mathcal{T}_{h_k}\ (k=2,3,\cdots n)$ such that $V_H\subset
V_{h_2}\subset \cdots \subset V_{h_n}$
(\cite{BrennerScott,Ciarlet}). Our multi-level correction algorithm
to obtain the approximation of the eigenpair can be defined as
follows (see section 3 and section 4):

\begin{enumerate}
\item Solve an eigenvalue problem in the coarsest space $V_H$:

Find $(\lambda_H, u_H)\in \mathcal{R}\times V_H$ such that
$\|u_H\|_0=1$ and
\begin{eqnarray*}
\int_{\Omega}\nabla u_H\nabla v_Hd\Omega &=&
\lambda_H\int_{\Omega}u_Hv_Hd\Omega,\ \ \ \forall v_H\in V_H.
\end{eqnarray*}
\item Set $h_1=H$ and Do $k=1,\cdots, n-2$
\begin{itemize}
\item
Solve the following auxiliary source problem:

Find $\widetilde{u}_{h_{k+1}}\in V_{h_{k+1}}$ such that
\begin{eqnarray*}
\int_{\Omega}\nabla \widetilde{u}_{h_{k+1}}\nabla v_{h_{k+1}}d\Omega
&=& \lambda_{h_k}\int_{\Omega}u_{h_k}v_{h_{k+1}}d\Omega,\ \ \
\forall v_{h_{k+1}}\in V_{h_{k+1}}.
\end{eqnarray*}
\item Define a new finite element space $V_{H,h_{k+1}}=V_H+{\rm
span}\{\widetilde{u}_{h_{k+1}}\}$ and solve the following eigenvalue
problem:

Find $(\lambda_{h_{k+1}}, u_{h_{k+1}})\in \mathcal{R}\times
V_{H,h_{k+1}}$ such that $\|u_{h_{k+1}}\|_0=1$ and
\begin{eqnarray*}
\int_{\Omega}\nabla u_{h_{k+1}}\nabla v_{H,h_{k+1}}d\Omega
=\lambda_{h_{k+1}}\int_{\Omega} u_{h_{k+1}} v_{H,h_{k+1}}d\Omega, \
\ \ \forall v_{H,h_{k+1}}\in V_{H,h_{k+1}}.
\end{eqnarray*}
\end{itemize}
end Do
\item
Solve the following auxiliary source problem:

Find $\widetilde{u}_{h_{n}}\in V_{h_{n}}$ such that
\begin{eqnarray*}
\int_{\Omega}\nabla u_{h_n}\nabla v_{h_n}d\Omega &=&
\lambda_{h_{n-1}}\int_{\Omega} u_{h_{n-1}}v_{h_{n}}d\Omega,\ \ \
\forall v_{h_n}\in V_{h_n}.
\end{eqnarray*}
Then compute the Rayleigh quotient
\begin{eqnarray*}
\lambda_{h_n}=\frac{\|\nabla u_{h_n}\|_0^2}{\|u_{h_n}\|_0^2}.
\end{eqnarray*}
\end{enumerate}
If, for example, $\lambda_H$ is the first eigenvalue of the
 problem at the first step and $\Omega$ is a convex domain, then we can establish the
following results (see section 3 and section 4 for details)
\begin{eqnarray*}
\|\nabla (u-u_{h_n})\|_0
=\mathcal{O}\Big(\sum_{k=1}^nh_kH^{n-k}\Big),\ \ \ {\rm and}\ \ \
|\lambda_{h_n}-\lambda| =
\mathcal{O}\Big(\sum_{k=1}^nh_k^2H^{2(n-k)}\Big).
\end{eqnarray*}
These two estimates means that we can obtain asymptotic optimal
errors by taking $H=\sqrt[n]{h_n}$ and $h_k=H^k\ (k=2,\cdots,n-1)$.

In this method, we replace solving eigenvalue problem on the finest
finite element space by solving a series of boundary value problems
in the corresponding series of finite element spaces and a series of
eigenvalue problems in the coarsest finite element space. As we
know, there exists multigrid method for solving boundary value
problems efficiently. So this correction method can improve the
efficiency of solving eigenvalue problems.

An outline of the paper goes as follows. In Section 2, we introduce
finite element method for eigenvalue problem and the corresponding
error estimates. A type of one correction step is given in section
3. In Section 4, we propose a type of multi-level correction
algorithm for solving eigenvalue problem by finite element methods.
In section 5, two numerical examples are presented to validate our
theoretical analysis and some concluding remarks are given in the
last section.

\section{Discretization by finite element method}
In this section, we introduce some notation and error estimates of
the finite element approximation for eigenvalue problems. In this
paper, the letter $C$ (with or without subscripts) denotes a generic
positive constant which may be different at different occurrences.
For convenience, the symbols $\lesssim$, $\gtrsim$ and $\approx$
will be used in this paper. That $x_1\lesssim y_1, x_2\gtrsim y_2$
and $x_3\approx y_3$, mean that $x_1\leq C_1y_1$, $x_2 \geq c_2y_2$
and $c_3x_3\leq y_3\leq C_3x_3$ for some constants $C_1, c_2, c_3$
and $C_3$ that are independent of mesh sizes.

Let $(V, \|\cdot\|)$ be a real Hilbert space with inner product
$(\cdot, \cdot)$ and norm $\|\cdot\|$, respectively. Let
$a(\cdot,\cdot)$, $b(\cdot,\cdot)$ be two symmetric bilinear forms
on $X \times X$ satisfying
\begin{eqnarray}
a(w, v) &\lesssim&\|w\|\|v\|, \ \ \forall w\in V\ {\rm and}\ \forall v\in V,\label{Bounded}\\
\|w\|^2&\lesssim& a(w,w),\ \ \forall w\in V\ {\rm and}\ 0<b(w,w), \
\ \forall w\in V\ {\rm and}\ w\neq 0.\label{Coercive}
\end{eqnarray}
From (\ref{Bounded}) and (\ref{Coercive}), we know that
$\|\cdot\|_a:=a(\cdot,\cdot)^{1/2}$ and $\|\cdot\|$ are two
equivalent norms on $V$. We assume that the norm $\|\cdot\|$ is
relatively compact with respect to the norm
$\|\cdot\|_b:=b(\cdot,\cdot)^{1/2}$. We shall use $a(\cdot,\cdot)$
and $\|\cdot\|_a$ as the inner product and norm on $V$ in the rest
of this paper.

 Set $$W := {\rm the\ completion\ of}\ V \ {\rm with\
respect\ to}\ \|\cdot\|_b.$$ Thus $W$ is a Hilbert space with the
inner product $b(\cdot, \cdot)$ and compactly imbedded in $V$.
Construct a ``negative space" by $V' = {\rm the\ dual\ of}\ V\ {\rm
with\ a\ norm}\  \|\cdot\|_{-a}$ given by
\begin{eqnarray}
\|w\|_{-a}=\sup_{v\in V,\|v\|_a=1}b(w,v).
\end{eqnarray}
Then $W \subset V'$ compactly, and for $v \in V$, $b(w, v)$ has a
continuous extension to $w\in V'$ such that $b(w, v)$ is continuous
on $V'$ by Hahn-Banach theorem (\cite{Conway}). We assume that
$V_h\subset V$ is a family of finite-dimensional spaces that satisfy
the following assumption:

For any $w \in V$
\begin{eqnarray}\label{Approximation_Property}
\lim_{h\rightarrow0}\inf_{v\in V_h}\|w-v\|_a = 0.
\end{eqnarray}
Let $P_h$ be the finite element projection operator of $V$ onto
$V_h$ defined by
\begin{eqnarray}\label{Projection_Problem}
a(w - P_hw, v) = 0,\ \ \ \forall w\in V\ {\rm and}\ \forall v\in
V_h.
\end{eqnarray}
Obviously
\begin{eqnarray}\label{Projection_Optimal}
\|P_hw\|_a\leq \|w\|_a,\ \ \ \forall w\in V.
\end{eqnarray}
For any $w\in V$, by (\ref{Approximation_Property}) we have
\begin{eqnarray}
\|w-P_hw\|_a&=&o(1),\ \ \ {\rm as}\ h\rightarrow 0.
\end{eqnarray}
Define $\eta_a(h)$ as
\begin{eqnarray}
\eta_a(h)=\sup_{f\in V,\|f\|_a=1}\inf_{v\in V_h}\|T f-v\|_a,
\end{eqnarray}
where the operator $T: V'\mapsto V$ is defined as
\begin{eqnarray}
a(Tf,v)&=&b(f,v),\ \ \ \forall f\in V'\ {\rm and}\ \forall v\in V.
\end{eqnarray}
In order to derive the error estimate of eigenpair approximation in
negative norm $\|\cdot\|_{-a}$, we need the following negative norm
error estimate of the finite element projection operator $P_h$.
\begin{lemma}\label{Negative_norm_estimate_Lemma}
(\cite[Lemma 3.3 and Lemma 3.4]{BabuskaOsborn})
\begin{eqnarray}
\eta_a(h)&=&o(1),\ \ \ {\rm as}\ h\rightarrow 0,
\end{eqnarray}
and
\begin{eqnarray}\label{Negative_norm_Error}
\|w-P_hw\|_{-a}&\lesssim&\eta_a(h)\|w-P_hw\|_a,\ \ \ \forall w\in V.
\end{eqnarray}
\end{lemma}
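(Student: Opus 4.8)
The plan is to assemble the two assertions from the abstract hypotheses already in force; both are classical, which is why the statement is quoted from \cite{BabuskaOsborn}. The estimate (\ref{Negative_norm_Error}) will come from an Aubin--Nitsche duality argument, while $\eta_a(h)=o(1)$ will follow from compactness of the solution operator $T$ together with the approximation property (\ref{Approximation_Property}). I would begin with the compactness fact, since everything else is bookkeeping with the two equivalent norms on $V$.

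\textbf{Compactness of $T$ and $\eta_a(h)=o(1)$.} I would first check that $T$, restricted to $V$, is compact from $(V,\|\cdot\|_a)$ into itself. Let $\{f_n\}\subset V$ be bounded in $\|\cdot\|_a$; by the norm equivalence coming from (\ref{Bounded})--(\ref{Coercive}) it is bounded in $\|\cdot\|$, so by the assumed relative compactness of $\|\cdot\|$ with respect to $\|\cdot\|_b$ a subsequence (not relabelled) is Cauchy in $\|\cdot\|_b$, hence Cauchy in $\|\cdot\|_{-a}$ because $\|w\|_{-a}=\sup_{\|v\|_a=1}b(w,v)\lesssim\|w\|_b$. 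Then, from $a(Tf,v)=b(f,v)$ and Cauchy--Schwarz in the $a$-inner product,
\begin{eqnarray*}
\|T(f_n-f_m)\|_a^2 &=& a\big(T(f_n-f_m),T(f_n-f_m)\big) \ = \ b\big(f_n-f_m,T(f_n-f_m)\big)\\
&\le& \|f_n-f_m\|_{-a}\,\|T(f_n-f_m)\|_a,
\end{eqnarray*}
so $\|T(f_n-f_m)\|_a\le\|f_n-f_m\|_{-a}\to0$ and $\{Tf_n\}$ converges in $\|\cdot\|_a$. Hence $K:=\overline{\{Tf:f\in V,\ \|f\|_a\le1\}}$ is compact in $(V,\|\cdot\|_a)$. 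Since $P_h$ is the $a$-orthogonal (best) projection onto $V_h$, $\eta_a(h)=\sup_{\|f\|_a=1}\|Tf-P_hTf\|_a\le\sup_{g\in K}\|g-P_hg\|_a$. Each functional $g\mapsto\|g-P_hg\|_a$ is $1$-Lipschitz on $(V,\|\cdot\|_a)$ (both $P_h$ and $I-P_h$ are $a$-contractions, using (\ref{Projection_Optimal})) and tends to $0$ pointwise by (\ref{Approximation_Property}); a finite $\varepsilon$-net argument on the compact set $K$ then promotes this to uniform convergence: given $\varepsilon>0$, cover $K$ by balls $B(g_i,\varepsilon/3)$, $i=1,\dots,N$, take $h$ so small that $\|g_i-P_hg_i\|_a<\varepsilon/3$ for each $i$, and for arbitrary $g\in K$ choose $g_i$ with $\|g-g_i\|_a<\varepsilon/3$ to get $\|g-P_hg\|_a\le\|g-g_i\|_a+\|g_i-P_hg_i\|_a+\|P_h(g_i-g)\|_a<\varepsilon$. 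Thus $\eta_a(h)\to0$.

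\textbf{The negative-norm estimate.} Fix $w\in V$. By the definition of $\|\cdot\|_{-a}$, $\|w-P_hw\|_{-a}=\sup_{\|f\|_a=1}b(w-P_hw,f)$. Using the symmetry of $b$ and $a(Tf,v)=b(f,v)$, $b(w-P_hw,f)=a(w-P_hw,Tf)$, and since $P_hTf\in V_h$ the Galerkin orthogonality (\ref{Projection_Problem}) gives $a(w-P_hw,P_hTf)=0$; therefore
\begin{eqnarray*}
b(w-P_hw,f) &=& a(w-P_hw,Tf-P_hTf)\\
&\le& \|w-P_hw\|_a\,\|Tf-P_hTf\|_a \ \le \ \eta_a(h)\,\|w-P_hw\|_a,
\end{eqnarray*}
the last inequality by the definition of $\eta_a(h)$ with $\|f\|_a=1$. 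Taking the supremum over such $f$ yields (\ref{Negative_norm_Error}).

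The step I expect to be the crux is the compactness of $T:(V,\|\cdot\|_a)\to(V,\|\cdot\|_a)$: the duality argument and the $\varepsilon$-net argument are routine once $K$ is known to be compact, but the compactness itself is exactly where the coercivity in (\ref{Coercive}) and the relative-compactness hypothesis are genuinely used, through the chain in which boundedness in $\|\cdot\|_a$ yields precompactness in $\|\cdot\|_b$, hence in $\|\cdot\|_{-a}$, and then the identity $\|T(f_n-f_m)\|_a^2=b(f_n-f_m,T(f_n-f_m))$ converts this into precompactness of $\{Tf_n\}$ in $\|\cdot\|_a$. One also has to be careful that the continuous extension of $b(\cdot,v)$ to $V'$ and the mapping property $T:V'\to V$ are invoked consistently; all of this is precisely the content of \cite[Lemma 3.3 and Lemma 3.4]{BabuskaOsborn}.
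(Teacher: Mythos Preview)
Your argument is correct. The paper itself does not supply a proof of this lemma; it simply quotes the result from \cite[Lemma~3.3 and Lemma~3.4]{BabuskaOsborn}. What you have written is precisely the standard route taken in that reference: the Aubin--Nitsche duality argument (use $a(Tf,\cdot)=b(f,\cdot)$, Galerkin orthogonality, and Cauchy--Schwarz in the $a$-inner product) for (\ref{Negative_norm_Error}), and compactness of $T$ on $(V,\|\cdot\|_a)$ together with a uniform-convergence-on-compacta argument for $\eta_a(h)\to 0$. Each step checks out, including the identification $\inf_{v\in V_h}\|Tf-v\|_a=\|Tf-P_hTf\|_a$ via $a$-orthogonality of $P_h$, the $1$-Lipschitz property of $g\mapsto\|(I-P_h)g\|_a$, and the chain from boundedness in $\|\cdot\|_a$ to precompactness in $\|\cdot\|_{-a}$ using the relative compactness of $\|\cdot\|$ with respect to $\|\cdot\|_b$.

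There is nothing to compare against in the present paper, so no divergence in method can be reported; your proof simply fills in what the authors chose to cite.
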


In our methodology description, we are concerned with the following
general eigenvalue problem:
% The corresponding weak formulation of
%the problem (\ref{problem}) is to seeks

Find $(\lambda, u )\in \mathcal{R}\times V$ such that $b(u,u)=1$ and
\begin{eqnarray}
a(u,v)&=&\lambda b(u,v),\quad \forall v\in V. \label{weak_problem}
\end{eqnarray}
%where $a(\cdot, \cdot)$ and $b(\cdot,\cdot)$ are bilinear forms
%defined by
%\begin{eqnarray*}
%a(u,v)=\int_{\Omega}\nabla u\nabla vd\Omega, &&
%b(u,v)=\int_{\Omega}uvd\Omega.
%\end{eqnarray*}

For the eigenvalue $\lambda$, there exists the following Rayleigh
quotient expression (\cite{Babuska2,BabuskaOsborn,XuZhou})
\begin{eqnarray}\label{Rayleigh_quotient}
 \lambda=\frac{a(u,u)}{b(u,u)}.
\end{eqnarray}
From \cite{BabuskaOsborn,Chatelin}, we know the eigenvalue problem
(\ref{weak_problem}) has an eigenvalue sequence $\{\lambda_j \}:$
$$0<\lambda_1\leq \lambda_2\leq\cdots\leq\lambda_k\leq\cdots,\ \ \
\lim_{k\rightarrow\infty}\lambda_k=\infty,$$ and the associated
eigenfunctions
$$u_1,u_2,\cdots,u_k,\cdots,$$
where $b(u_i,u_j)=\delta_{ij}$. In the sequence $\{\lambda_j\}$, the
$\lambda_j$ are repeated according to their geometric multiplicity.

Now, let us define the finite element approximations of the problem
(\ref{weak_problem}). First we generate a shape-regular
decomposition of the computing domain $\Omega\subset \mathcal{R}^d\
(d=2,3)$ into triangles or rectangles for $d=2$ (tetrahedrons or
hexahedrons for $d=3$). The diameter of a cell $K\in\mathcal{T}_h$
is denoted by $h_K$. The mesh diameter $h$ describes the maximum
diameter of all cells $K\in\mathcal{T}_h$. Based on the mesh
$\mathcal{T}_h$, we can construct a finite element space denoted by
$V_h\subset V$. In order to do multi-level correction method, we
start the process on the original mesh $\mathcal{T}_H$ with the mesh
size $H$ and the original coarsest finite element space $V_H$
defined on the mesh $\mathcal{T}_H$.

Then we can define the approximation of eigenpair $(\lambda,u)$ of
(\ref{weak_problem}) by the finite element method as:

Find $(\lambda_h, u_h)\in \mathcal{R}\times V_h$ such that
$b(u_h,u_h)=1$ and
\begin{eqnarray}\label{weak_problem_Discrete}
a(u_h,v_h)&=&\lambda_hb(u_h,v_h),\quad\ \  \ \forall v_h\in V_h.
\end{eqnarray}

From (\ref{weak_problem_Discrete}), we can know the following
Rayleigh quotient expression for $\lambda_h$ holds
(\cite{Babuska2,BabuskaOsborn,XuZhou})
\begin{eqnarray}\label{eigenvalue_Rayleigh}
\lambda_h &=&\frac{a(u_h,u_h)}{b(u_h,u_h)}.
\end{eqnarray}
Similarly, we know from \cite{BabuskaOsborn,Chatelin} the eigenvalue
problem (\ref{weak_problem}) has eigenvalues
$$0<\lambda_{1,h}\leq \lambda_{2,h}\leq\cdots\leq \lambda_{k,h}\leq\cdots\leq \lambda_{N_h,h},$$
and the corresponding eigenfunctions
$$u_{1,h}, u_{2,h},\cdots, u_{k,h}, \cdots, u_{N_h,h},$$
where $b(u_{i,h},u_{j,h})=\delta_{ij}, 1\leq i,j\leq N_h$ ($N_h$ is
the dimension of the finite element space $V_h$).

From the minimum-maximum principle (\cite{Babuska2,BabuskaOsborn}),
the following upper bound result holds
$$\lambda_i\leq \lambda_{i,h}, \ \ \ i=1,2,\cdots, N_h.$$
Let $M(\lambda_i)$ denote the eigenspace corresponding to the
eigenvalue $\lambda_i$ which is defined by
\begin{eqnarray}
M(\lambda_i)&=&\big\{w\in V: w\ {\rm is\ an\ eigenvalue\ of\
(\ref{weak_problem})\ corresponding\ to} \ \lambda_i\nonumber\\
&&\ \ \ {\rm and}\ \|w\|_b=1\big\}.
\end{eqnarray}
Then we define
\begin{eqnarray}
\delta_h(\lambda_i)=\sup_{w\in M(\lambda_i)}\inf_{v\in
V_h}\|w-v\|_a.
\end{eqnarray}

%The following results, see P. 699 of \cite{BabuskaOsborn} and Lemma
%3.7 and (3.29b) of \cite{Babuska2} or cf. \cite{Chatelin}, will be
%employed in the coming discussions.
For the eigenpair approximations by finite element method, there
exist the following error estimates.
\begin{proposition}(\cite[Lemma 3.7, (3.29b)]{Babuska2},\cite[P. 699]{BabuskaOsborn} and
\cite{Chatelin})\label{Error_estimate_Proposition}

\noindent(i) For any eigenfunction approximation $u_{i,h}$ of
(\ref{weak_problem_Discrete}) $(i = 1, 2, \cdots, N_h)$, there is an
eigenfunction $u_i$ of (\ref{weak_problem}) corresponding to
$\lambda_i$ such that $\|u_i\|_b = 1$ and
\begin{eqnarray}\label{Eigenfunction_Error}
\|u_i-u_{i,h}\|_a&\leq& C_i\delta_h(\lambda_i).
\end{eqnarray}
Furthermore,
\begin{eqnarray}\label{Eigenfunction_Error_Nagative}
\|u_i- u_{i,h}\|_{-a} \leq C_i\eta_a(h)\|u_i - u_{i,h}\|_a.
\end{eqnarray}
(ii) For each eigenvalue, we have
\begin{eqnarray}
\lambda_i \leq \lambda_{i,h}\leq \lambda_i +
C_i\delta_h^2(\lambda_i)
\end{eqnarray}
 Here and hereafter $C_i$ is some constant depending on $i$ but independent of  the mesh size $h$.
\end{proposition}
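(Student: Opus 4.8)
The plan is to recast the eigenvalue problem in operator form and invoke the classical spectral approximation theory for compact self-adjoint operators, keeping careful track of the geometric quantities $\delta_h(\lambda_i)$ and $\eta_a(h)$. Introduce the discrete solution operator $T_h:V'\mapsto V_h$ by $a(T_hf,v_h)=b(f,v_h)$ for all $v_h\in V_h$; then $T_h=P_hT$, the operator $T$ is compact, self-adjoint and positive with respect to $a(\cdot,\cdot)$, its nonzero eigenvalues are $\mu_i=1/\lambda_i$ with eigenspaces $M(\lambda_i)$, and likewise $T_h$ has eigenvalues $\mu_{i,h}=1/\lambda_{i,h}$ with eigenfunctions $u_{i,h}$. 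The key uniform estimate is
\begin{eqnarray*}
\|T-T_h\|_a=\|(I-P_h)T\|_a=\sup_{\|f\|_a=1}\inf_{v\in V_h}\|Tf-v\|_a=\eta_a(h),
\end{eqnarray*}
which tends to $0$ by Lemma \ref{Negative_norm_estimate_Lemma}, so $T_h\to T$ in operator norm and the spectrum of $T_h$ converges to that of $T$ with no spurious eigenvalues for $h$ small.

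For part (i), I would use spectral projections: let $E_i$ be the $a$-orthogonal projection of $V$ onto $M(\lambda_i)$ and $E_{i,h}$ the spectral projection of $T_h$ onto the sum of the discrete eigenspaces whose eigenvalues cluster near $\mu_i$, both represented as contour integrals of the resolvents $(z-T)^{-1}$ and $(z-T_h)^{-1}$ along a small circle around $\mu_i$. Resolvent perturbation, using the spectral gap separating $\mu_i$ from the rest of $\sigma(T)$ (inherited by $\sigma(T_h)$ for $h$ small), gives $\|(E_i-E_{i,h})|_{M(\lambda_i)}\|_a\lesssim\|(T-T_h)|_{M(\lambda_i)}\|_a\lesssim\delta_h(\lambda_i)$, the last step because $Tw=\mu_iw$ for $w\in M(\lambda_i)$, so $(T-T_h)w=\mu_i(I-P_h)w$ and $\|(I-P_h)w\|_a=\inf_{v\in V_h}\|w-v\|_a$. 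One then picks $u_{i,h}$ in the discrete cluster eigenspace, defines $u_i\in M(\lambda_i)$ by normalizing $E_iu_{i,h}$, and splits $u_i-u_{i,h}=(u_i-E_iu_{i,h})+(E_iu_{i,h}-E_{i,h}u_{i,h})+(E_{i,h}u_{i,h}-u_{i,h})$, each piece being controlled by $\delta_h(\lambda_i)$ (the middle one by the projection estimate, the outer ones by best approximation and the normalization), which yields (\ref{Eigenfunction_Error}). The negative-norm bound (\ref{Eigenfunction_Error_Nagative}) comes from an Aubin--Nitsche duality: for $\|v\|_a=1$ write $b(u_i-u_{i,h},v)=a(u_i-u_{i,h},Tv)=a(u_i-u_{i,h},Tv-P_hTv)+a(u_i-u_{i,h},P_hTv)$; the first term is $\lesssim\|u_i-u_{i,h}\|_a\,\eta_a(h)$ by the definition of $\eta_a(h)$, and the second is a higher-order term (using $a(u_i,v_h)=\lambda_ib(u_i,v_h)$, $a(u_{i,h},v_h)=\lambda_{i,h}b(u_{i,h},v_h)$ together with (\ref{Negative_norm_Error})), so $\|u_i-u_{i,h}\|_{-a}\lesssim\eta_a(h)\|u_i-u_{i,h}\|_a$.

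For part (ii), the lower bound $\lambda_i\leq\lambda_{i,h}$ is exactly the minimum--maximum principle, since $V_h\subset V$ shrinks the family of subspaces over which the min is taken. For the upper bound I would use the Rayleigh quotient identity: for the pair $(\lambda_i,u_i)$ of (\ref{weak_problem}) with $\|u_i\|_b=1$ and any $w\in V$ with $b(w,w)\neq0$,
\begin{eqnarray*}
\frac{a(w,w)}{b(w,w)}-\lambda_i=\frac{a(w-u_i,w-u_i)-\lambda_i\,b(w-u_i,w-u_i)}{b(w,w)},
\end{eqnarray*}
which is checked by expanding both sides and using $a(u_i,v)=\lambda_ib(u_i,v)$. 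Taking $w=u_{i,h}$ (so $b(w,w)=1$ and, by (\ref{eigenvalue_Rayleigh}), the left-hand side equals $\lambda_{i,h}-\lambda_i$) gives $\lambda_{i,h}-\lambda_i=\|u_i-u_{i,h}\|_a^2-\lambda_i\|u_i-u_{i,h}\|_b^2\leq\|u_i-u_{i,h}\|_a^2\leq C_i\delta_h^2(\lambda_i)$ by part (i).

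The technical heart — and the step I expect to be the main obstacle — is the resolvent/spectral-gap estimate underlying part (i): one must handle $\lambda_i$ of arbitrary geometric multiplicity, possibly clustered with neighbouring eigenvalues, so the contour-integral bounds have to be made uniform in $h$ and the choice and normalization of $u_i$ must be done with care; everything else is a direct computation (the Rayleigh identity), a citation (min--max), or a routine duality argument. Since these are precisely the estimates established in \cite{Babuska2,BabuskaOsborn,Chatelin}, the cleanest route is to quote them, which is what we do here.
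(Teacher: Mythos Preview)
Your proposal is correct and aligns with the paper: the paper does not supply its own proof of this proposition but simply cites \cite{Babuska2,BabuskaOsborn,Chatelin}, exactly as you ultimately do. Your sketch of the operator-theoretic argument (norm convergence $T_h\to T$, spectral projections via contour integrals, Aubin--Nitsche duality for the negative norm, and the Rayleigh-quotient identity for the eigenvalue bound) is a faithful outline of what those references contain, so nothing further is needed here.
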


\section{One correction step}
In this section, we present a type of correction step to improve the
accuracy of the current eigenvalue and eigenfunction approximations.
This correction method contains solving an auxiliary source problem
in the finer finite element space and an eigenvalue problem on the
coarsest finite element space. For simplicity of notation, we set
$(\lambda,u)=(\lambda_i,u_i)\ (i=1,2,\cdots,k,\cdots)$  and
$(\lambda_h, u_h)=(\lambda_{i,h},u_{i,h})\ (i=1,2,\cdots,N_h)$ to
denote an eigenpair of problem (\ref{weak_problem}) and
(\ref{weak_problem_Discrete}), respectively.

To analyze our method, we introduce the error expansion of
eigenvalue by the Rayleigh quotient formula which comes from
\cite{Babuska2,BabuskaOsborn,LinYan,XuZhou}.
\begin{theorem}\label{Rayleigh_Quotient_error_theorem}
Assume $(\lambda,u)$ is the true solution of the eigenvalue problem
(\ref{weak_problem}),  $0\neq \psi\in V$. Let us define
\begin{eqnarray}\label{rayleighw}
\widehat{\lambda}=\frac{a(\psi,\psi)}{b(\psi,\psi)}.
\end{eqnarray}
Then we have
\begin{eqnarray}\label{rayexpan}
\widehat{\lambda}-\lambda
&=&\frac{a(u-\psi,u-\psi)}{b(\psi,\psi)}-\lambda
\frac{b(u-\psi,u-\psi)}{b(\psi,\psi)}.
\end{eqnarray}
\end{theorem}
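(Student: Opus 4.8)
The plan is to verify the identity \eqref{rayexpan} by a direct algebraic expansion, using only the definition \eqref{rayleighw} of $\widehat{\lambda}$, the weak eigenvalue equation \eqref{weak_problem}, and the symmetry and bilinearity of $a(\cdot,\cdot)$ and $b(\cdot,\cdot)$. First I would expand the two quadratic forms on the right-hand side of \eqref{rayexpan}. Using bilinearity and symmetry,
\begin{eqnarray*}
a(u-\psi,u-\psi) &=& a(u,u) - 2a(u,\psi) + a(\psi,\psi),\\
b(u-\psi,u-\psi) &=& b(u,u) - 2b(u,\psi) + b(\psi,\psi).
\end{eqnarray*}
Substituting these into the right-hand side of \eqref{rayexpan} and combining over the common denominator $b(\psi,\psi)$ yields
\begin{eqnarray*}
\frac{a(u-\psi,u-\psi) - \lambda b(u-\psi,u-\psi)}{b(\psi,\psi)}
= \frac{\big(a(u,u)-\lambda b(u,u)\big) - 2\big(a(u,\psi)-\lambda b(u,\psi)\big) + \big(a(\psi,\psi)-\lambda b(\psi,\psi)\big)}{b(\psi,\psi)}.
\end{eqnarray*}

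The next step is to simplify the three grouped terms in the numerator. For the first term, taking $v=u$ in \eqref{weak_problem} gives $a(u,u)=\lambda b(u,u)$, so $a(u,u)-\lambda b(u,u)=0$. For the middle term, taking $v=\psi$ in \eqref{weak_problem} gives $a(u,\psi)=\lambda b(u,\psi)$, so $a(u,\psi)-\lambda b(u,\psi)=0$ as well. Hence the numerator collapses to $a(\psi,\psi)-\lambda b(\psi,\psi)$, and the whole right-hand side of \eqref{rayexpan} equals
\begin{eqnarray*}
\frac{a(\psi,\psi)-\lambda b(\psi,\psi)}{b(\psi,\psi)} = \frac{a(\psi,\psi)}{b(\psi,\psi)} - \lambda = \widehat{\lambda}-\lambda,
\end{eqnarray*}
where the last equality is just \eqref{rayleighw}. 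This establishes \eqref{rayexpan}.

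There is essentially no obstacle here: the only subtlety worth flagging is that $\psi\neq 0$ guarantees $b(\psi,\psi)>0$ by \eqref{Coercive}, so dividing by $b(\psi,\psi)$ is legitimate and $\widehat{\lambda}$ is well defined. It is also worth noting that the eigenfunction normalization $b(u,u)=1$ plays no role in the identity — only the weak equation \eqref{weak_problem} tested against $u$ and against $\psi$ is used — so the result holds for any eigenpair regardless of scaling. The proof is thus a short, self-contained computation with the two cancellations from \eqref{weak_problem} being the crux.
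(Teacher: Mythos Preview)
Your proof is correct and follows essentially the same approach as the paper: a direct expansion of the quadratic forms together with the weak eigenvalue equation \eqref{weak_problem} to force the cross terms to vanish. The only cosmetic difference is direction---the paper starts from $\widehat{\lambda}-\lambda=\big(a(\psi,\psi)-\lambda b(\psi,\psi)\big)/b(\psi,\psi)$ and rewrites the numerator via $a(\psi,\psi)=a(\psi-u,\psi-u)+2a(\psi,u)-a(u,u)$, whereas you expand the right-hand side of \eqref{rayexpan} and reduce it to $\widehat{\lambda}-\lambda$---but the computation and the key cancellations are identical.
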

\begin{proof}
First from (\ref{Rayleigh_quotient}), (\ref{rayleighw}) and direct
computation, we have
\begin{eqnarray}\label{expansion_lambad_lambda_1}
\widehat{\lambda}-\lambda&=&\frac{a(\psi,\psi)-\lambda
b(\psi,\psi)}{b(\psi,\psi)}\nonumber\\
&=&\frac{a(\psi- u,\psi- u)+2a(\psi,u)-a(u, u)-\lambda
b(\psi,\psi)}{b(\psi,\psi)}\nonumber\\
&=&\frac{a(\psi-u,\psi-u)+2\lambda b(\psi,u)-\lambda b(u,u)-\lambda b(\psi,\psi)}{b(\psi,\psi)}\nonumber\\
&=&\frac{a(\psi- u,\psi-u)-\lambda b(\psi-u,\psi-u)}{b(\psi,\psi)}.
\end{eqnarray}
Then we obtain the desired result (\ref{rayexpan}).
\end{proof}
Assume we have obtained an eigenpair approximation
$(\lambda_{h_1},u_{h_1})\in\mathcal{R}\times V_{h_1}$. Now we
introduce a type of correction step to improve the accuracy of the
current eigenpair approximation $(\lambda_{h_1},u_{h_1})$. Let
$V_{h_2}\subset V$ be a finer finite element space such that
$V_{h_1}\subset V_{h_2}$. Based on this finer finite element space,
we define the following correction step.

\begin{algorithm}\label{Correction_Step}
One Correction Step

\begin{enumerate}
\item Define the following auxiliary source problem:

Find $\tilde{u}_{h_2}\in V_{h_2}$ such that
\begin{eqnarray}\label{aux_problem}
a(\widetilde{u}_{h_2},v_{h_2})&=&\lambda_{h_1}b(u_{h_1},v_{h_2}),\ \
\ \forall v_{h_2}\in V_{h_2}.
\end{eqnarray}
Solve this equation to obtain a new eigenfunction approximation
$\tilde{u}_{h_2}\in V_{h_2}$.
\item  Define a new finite element
space $V_{H,h_2}=V_H+{\rm span}\{\widetilde{u}_{h_2}\}$ and solve
the following eigenvalue problem:

Find $(\lambda_{h_2},u_{h_2})\in\mathcal{R}\times V_{H,h_2}$ such
that $b(u_{h_2},u_{h_2})=1$ and
\begin{eqnarray}\label{Eigen_Augment_Problem}
a(u_{h_2},v_{H,h_2})&=&\lambda_{h_2} b(u_{h_2},v_{H,h_2}),\ \ \
\forall v_{H,h_2}\in V_{H,h_2}.
\end{eqnarray}
\end{enumerate}
Define $(\lambda_{h_2},u_{h_2})={\it
Correction}(V_H,\lambda_{h_1},u_{h_1},V_{h_2})$.
\end{algorithm}
\begin{theorem}\label{Error_Estimate_One_Correction_Theorem}
Assume the current eigenpair approximation
$(\lambda_{h_1},u_{h_1})\in\mathcal{R}\times V_{h_1}$ has the
following error estimates
\begin{eqnarray}
\|u-u_{h_1}\|_a &\lesssim &\varepsilon_{h_1}(\lambda),\label{Error_u_h_1}\\
\|u-u_{h_1}\|_{-a}&\lesssim&\eta_a(H)\|u-u_{h_1}\|_a,\label{Error_u_h_1_nagative}\\
|\lambda-\lambda_{h_1}|&\lesssim&\varepsilon_{h_1}^2(\lambda).\label{Error_Eigenvalue_h_1}
\end{eqnarray}
Then after one correction step, the resultant approximation
$(\lambda_{h_2},u_{h_2})\in\mathcal{R}\times V_{h_2}$ has the
following error estimates
\begin{eqnarray}
\|u-u_{h_2}\|_a &\lesssim &\varepsilon_{h_2}(\lambda),\label{Estimate_u_u_h_2}\\
\|u-u_{h_2}\|_{-a}&\lesssim&\eta_a(H)\|u-u_{h_2}\|_a,\label{Estimate_u_h_2_Nagative}\\
|\lambda-\lambda_{h_2}|&\lesssim&\varepsilon_{h_2}^2(\lambda),\label{Estimate_lambda_lambda_h_2}
\end{eqnarray}
where
$\varepsilon_{h_2}(\lambda):=\eta_a(H)\varepsilon_{h_1}(\lambda)+\varepsilon_{h_1}^2(\lambda)+\delta_{h_2}(\lambda)$.
\end{theorem}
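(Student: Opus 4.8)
The plan is to analyze the two stages of the correction step separately, and then combine the estimates via the Rayleigh quotient expansion of Theorem~\ref{Rayleigh_Quotient_error_theorem}. First I would estimate the error of the auxiliary solution $\widetilde{u}_{h_2}$. Observe that the source problem \eqref{aux_problem} is precisely the equation for $T_{h_2}$ applied to $\lambda_{h_1}u_{h_1}$, i.e.\ $\widetilde u_{h_2} = P_{h_2} T(\lambda_{h_1}u_{h_1})$. Since $u = Tu\cdot\lambda = T(\lambda u)$, the difference $u - \widetilde u_{h_2}$ splits as
\begin{eqnarray*}
u-\widetilde u_{h_2} = (u - P_{h_2}u) + P_{h_2}\big(T(\lambda u) - T(\lambda_{h_1}u_{h_1})\big)
= (u-P_{h_2}u) + P_{h_2}T(\lambda u - \lambda_{h_1}u_{h_1}).
\end{eqnarray*}
The first term is bounded by $\delta_{h_2}(\lambda)$ by definition of the best approximation (up to the equivalence of norms). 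For the second term, I would write $\lambda u - \lambda_{h_1}u_{h_1} = \lambda(u-u_{h_1}) + (\lambda-\lambda_{h_1})u_{h_1}$, apply the boundedness of $T$ from $V'$ to $V$ and of $P_{h_2}$, and use that $\|T w\|_a \lesssim \|w\|_{-a}$ together with the negative-norm hypothesis \eqref{Error_u_h_1_nagative} and the eigenvalue hypothesis \eqref{Error_Eigenvalue_h_1}. This yields
\begin{eqnarray*}
\|u-\widetilde u_{h_2}\|_a \lesssim \eta_a(H)\|u-u_{h_1}\|_a + |\lambda-\lambda_{h_1}| + \delta_{h_2}(\lambda)
\lesssim \eta_a(H)\varepsilon_{h_1}(\lambda) + \varepsilon_{h_1}^2(\lambda) + \delta_{h_2}(\lambda) = \varepsilon_{h_2}(\lambda).
\end{eqnarray*}

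Next I would pass from $\widetilde u_{h_2}$ to the corrected eigenpair $(\lambda_{h_2},u_{h_2})$. Since $\widetilde u_{h_2}\in V_{H,h_2}$ and $V_{H,h_2}$ contains the coarse space $V_H$, the space $V_{H,h_2}$ approximates the eigenfunction $u$ at least as well as $\widetilde u_{h_2}$ does in the relevant direction; combining this with the standard a priori estimate of Proposition~\ref{Error_estimate_Proposition} applied on the (small) space $V_{H,h_2}$, one gets $\|u-u_{h_2}\|_a \lesssim \|u-\widetilde u_{h_2}\|_a \lesssim \varepsilon_{h_2}(\lambda)$, which is \eqref{Estimate_u_u_h_2}. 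This step requires a little care: Proposition~\ref{Error_estimate_Proposition} is stated in terms of $\delta_h(\lambda)$ for a generic finite element space, so I would check that its proof (or its statement) applies verbatim with $V_h$ replaced by $V_{H,h_2}$, and that $\delta_{V_{H,h_2}}(\lambda)\lesssim \|u-\widetilde u_{h_2}\|_a$. The negative-norm estimate \eqref{Estimate_u_h_2_Nagative} then follows from \eqref{Eigenfunction_Error_Nagative} of Proposition~\ref{Error_estimate_Proposition} together with the monotonicity $\eta_a$ as a function of the mesh: since $V_H\subset V_{H,h_2}$, the Aubin--Nitsche duality argument gives a factor $\eta_a(H)$ rather than $\eta_a(h_2)$.

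Finally, for the eigenvalue estimate \eqref{Estimate_lambda_lambda_h_2} I would invoke Theorem~\ref{Rayleigh_Quotient_error_theorem} with $\psi = u_{h_2}$: since $\lambda_{h_2}$ equals the Rayleigh quotient $a(u_{h_2},u_{h_2})/b(u_{h_2},u_{h_2})$, we get
\begin{eqnarray*}
\lambda_{h_2}-\lambda = \frac{a(u-u_{h_2},u-u_{h_2})}{b(u_{h_2},u_{h_2})} - \lambda\frac{b(u-u_{h_2},u-u_{h_2})}{b(u_{h_2},u_{h_2})}.
\end{eqnarray*}
The first term is $\lesssim \|u-u_{h_2}\|_a^2 \lesssim \varepsilon_{h_2}^2(\lambda)$ using boundedness of $a(\cdot,\cdot)$ and $b(u_{h_2},u_{h_2})=1$. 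For the second term I would use the negative-norm bound: $b(u-u_{h_2},u-u_{h_2}) = \|u-u_{h_2}\|_b^2 \lesssim \|u-u_{h_2}\|_{-a}\|u-u_{h_2}\|_a \lesssim \eta_a(H)\varepsilon_{h_2}^2(\lambda) = o(\varepsilon_{h_2}^2(\lambda))$, which is absorbed. Hence $|\lambda-\lambda_{h_2}|\lesssim \varepsilon_{h_2}^2(\lambda)$, completing the proof.

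I expect the main obstacle to be the transition in the second paragraph: rigorously justifying that solving the eigenvalue problem on the enriched space $V_{H,h_2} = V_H + \mathrm{span}\{\widetilde u_{h_2}\}$ yields an $H^1$-error controlled by $\|u-\widetilde u_{h_2}\|_a$ (and not merely by $\delta_{h_2}(\lambda)$, which would be too large since $H$ is coarse), and in particular that the constant $C_i$ from Proposition~\ref{Error_estimate_Proposition} stays uniform as the small space $V_{H,h_2}$ varies with $h_2$. The key point is that $\widetilde u_{h_2}$ is already close to the \emph{true} eigenfunction $u$ (not just to some discrete eigenfunction), so the spectral gap and the separation of $M(\lambda_i)$ from the rest of the spectrum are preserved uniformly; making this precise is where the technical weight of the argument lies.
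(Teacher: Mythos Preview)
Your proposal is correct and follows essentially the same route as the paper. The paper estimates $\|\widetilde u_{h_2}-P_{h_2}u\|_a$ by testing the difference of \eqref{aux_problem} and the projection equation against $\widetilde u_{h_2}-P_{h_2}u$ and using the definition of $\|\cdot\|_{-a}$, which is exactly your operator identity $\widetilde u_{h_2}=P_{h_2}T(\lambda_{h_1}u_{h_1})$ unwound; the remaining steps (applying Proposition~\ref{Error_estimate_Proposition} on $V_{H,h_2}$, the monotonicity $\widetilde\eta_a(H)\le\eta_a(H)$ from $V_H\subset V_{H,h_2}$, and the Rayleigh quotient expansion for the eigenvalue) are identical. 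The paper simply cites \cite{Babuska2,BabuskaOsborn} for the step $\|u-u_{h_2}\|_a\lesssim\sup_{w\in M(\lambda)}\inf_{v\in V_{H,h_2}}\|w-v\|_a\lesssim\|u-\widetilde u_{h_2}\|_a$ without further comment, so the uniformity concern you flag at the end is not addressed any more explicitly there than in your sketch.
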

\begin{proof}
From problems (\ref{Projection_Problem}), (\ref{weak_problem}) and
(\ref{aux_problem}), and (\ref{Error_u_h_1}),
(\ref{Error_u_h_1_nagative}) and (\ref{Error_Eigenvalue_h_1}), the
following estimate holds
\begin{eqnarray*}
\|\widetilde{u}_{h_2}-P_{h_2}u\|_a^2&\lesssim
&a(\widetilde{u}_{h_2}-P_{h_2}u,\widetilde{u}_{h_2}-P_{h_2}u)=b(\lambda_{h_1}u_{h_1}-\lambda
u,\widetilde{u}_{h_2}-P_{h_2}u)\nonumber\\
& \lesssim &\|\lambda_{h_1}u_{h_1}-\lambda
u\|_{-a}\|\widetilde{u}_{h_2}-P_{h_2}u\|_a\nonumber\\
&\lesssim & (|\lambda_{h_1}-\lambda|\|u_{h_1}\|_{-a}+\lambda
\|u_{h_1}-u\|_{-a})\|\widetilde{u}_{h_2}-P_{h_2}u\|_a\nonumber\\
&\lesssim &\big
(\varepsilon_{h_1}^2(\lambda)+\eta_a(H)\varepsilon_{h_1}(\lambda)\big)\|\widetilde{u}_{h_2}-P_{h_2}u\|_a.
\end{eqnarray*}
Then we have
\begin{eqnarray}\label{Estimate_u_tilde_u_h_2}
\|\widetilde{u}_{h_2}-P_{h_2}u\|_a &\lesssim
&\varepsilon_{h_1}^2(\lambda)+\eta_a(H)\varepsilon_{h_1}(\lambda).
\end{eqnarray}
Combining (\ref{Estimate_u_tilde_u_h_2}) and
 the error estimate of finite element projection
\begin{eqnarray*}
\|u-P_{h_2}u\|_a &\lesssim&\delta_{h_2}(\lambda),
\end{eqnarray*}
we have
\begin{eqnarray}\label{Error_tilde_u_h_2_u}
\|\widetilde{u}_{h_2}-u\|_{a}&\lesssim&\varepsilon_{h_1}^2(\lambda)+\eta_a(H)\varepsilon_{h_1}(\lambda)
+\delta_{h_2}(\lambda).
\end{eqnarray}
Now we come to estimate the eigenpair solution
$(\lambda_{h_2},u_{h_2})$ of problem (\ref{Eigen_Augment_Problem}).
Based on the error estimate theory of eigenvalue problem by finite
element method (\cite{Babuska2,BabuskaOsborn}), the following
estimates hold
\begin{eqnarray}\label{Error_u_u_h_2}
\|u-u_{h_2}\|_a&\lesssim& \sup_{w\in M(\lambda)}\inf_{v\in
V_{H,h_2}}\|w-v\|_a\lesssim \|u-\widetilde{u}_{h_2}\|_a,
\end{eqnarray}
and
\begin{eqnarray}\label{Error_u_u_h_2_Negative}
\|u-u_{h_2}\|_{-a}&\lesssim&\widetilde{\eta}_a(H)\|u-u_{h_2}\|_a,
\end{eqnarray}
where
\begin{eqnarray}\label{Eta_a_h_2}
\widetilde{\eta}_a(H)&=&\sup_{f\in V,\|f\|_a=1}\inf_{v\in
V_{H,h_2}}\|Tf-v\|_a \leq \eta_a(H).
\end{eqnarray}
From (\ref{Error_tilde_u_h_2_u}), (\ref{Error_u_u_h_2}),
(\ref{Error_u_u_h_2_Negative}) and (\ref{Eta_a_h_2}), we can obtain
(\ref{Estimate_u_u_h_2}) and (\ref{Estimate_u_h_2_Nagative}). The
estimate (\ref{Estimate_lambda_lambda_h_2}) can be derived by
Theorem \ref{Rayleigh_Quotient_error_theorem} and
(\ref{Estimate_u_u_h_2}).
\end{proof}

\section{Multi-level correction scheme}
In this section, we introduce a type of multi-level correction
scheme based on the {\it One Correction Step} defined in Algorithm
\ref{Correction_Step}. This type of correction method can improve
the convergence order after each correction step which is different
from the two-grid method in \cite{XuZhou}.

\begin{algorithm}\label{Multi_Correction}
Multi-level Correction Scheme
\begin{enumerate}
\item Construct a coarse finite element space $V_H$ and solve the
following eigenvalue problem:

Find $(\lambda_H,u_H)\in \mathcal{R}\times V_H$ such that
$b(u_H,u_H)=1$ and
\begin{eqnarray}\label{Initial_Eigen_Problem}
a(u_H,v_H)&=&\lambda_Hb(u_H,v_H),\ \ \ \ \forall v_H\in V_H.
\end{eqnarray}
\item Set $h_1=H$ and construct a series of finer finite element
spaces $V_{h_2},\cdots,V_{h_n}$ such that $\eta_a(H)\gtrsim
\delta_{h_1}(\lambda)\geq \delta_{h_2}(\lambda)\geq\cdots\geq
\delta_{h_n}(\lambda)$.
\item Do $k=0,1,\cdots,n-2$\\
Obtain a new eigenpair approximation
$(\lambda_{h_{k+1}},u_{h_{k+1}})\in \mathcal{R}\times V_{h_{k+1}}$
by a correction step
\begin{eqnarray}
(\lambda_{h_{k+1}},u_{h_{k+1}})=Correction(V_H,\lambda_{h_k},u_{h_k},V_{h_{k+1}}).
\end{eqnarray}
end Do
\item Solve the following source problem:

Find $u_{h_n}\in V_{h_n}$ such that
\begin{eqnarray}\label{aux_problem_h_n}
a(u_{h_n},v_{h_n})&=&\lambda_{h_{n-1}}b(v_{h_{n-1}},v_{h_n}),\ \ \
\forall v_{h_n}\in V_{h_n}.
\end{eqnarray}
Then compute the Rayleigh quotient of $u_{h_n}$
\begin{eqnarray}\label{Rayleigh_quotient_h_n}
\lambda_{h_n} &=&\frac{a(u_{h_n},u_{h_n})}{b(u_{h_n},u_{h_n})}.
\end{eqnarray}
\end{enumerate}
Finally, we obtain an eigenpair approximation
$(\lambda_{h_n},u_{h_n})\in \mathcal{R}\times V_{h_n}$.
\end{algorithm}
\begin{theorem}
After implementing Algorithm \ref{Multi_Correction}, the resultant
eigenpair approximation $(\lambda_{h_n},u_{h_n})$ has the following
error estimate
\begin{eqnarray}
\|u_{h_n}-u\|_a &\lesssim&\varepsilon_{h_n}(\lambda),\label{Multi_Correction_Err_fun}\\
|\lambda_{h_n}-\lambda|&\lesssim&\varepsilon_{h_n}^2(\lambda),\label{Multi_Correction_Err_eigen}
\end{eqnarray}
where
$\varepsilon_{h_n}(\lambda)=\sum\limits_{k=1}^{n}\eta_a(H)^{n-k}\delta_{h_k}(\lambda)$.
\end{theorem}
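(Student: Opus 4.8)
The plan is to prove the error estimates by induction on the correction level $k$, using Theorem~\ref{Error_Estimate_One_Correction_Theorem} as the inductive step and Proposition~\ref{Error_estimate_Proposition} to start the induction. First I would set up the induction hypothesis: for each $k=1,2,\dots,n-1$, the eigenpair approximation $(\lambda_{h_k},u_{h_k})$ satisfies
\begin{eqnarray*}
\|u-u_{h_k}\|_a\lesssim\varepsilon_{h_k}(\lambda),\quad
\|u-u_{h_k}\|_{-a}\lesssim\eta_a(H)\|u-u_{h_k}\|_a,\quad
|\lambda-\lambda_{h_k}|\lesssim\varepsilon_{h_k}^2(\lambda),
\end{eqnarray*}
where $\varepsilon_{h_k}(\lambda)=\sum_{j=1}^k\eta_a(H)^{k-j}\delta_{h_j}(\lambda)$. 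The base case $k=1$ follows from Proposition~\ref{Error_estimate_Proposition} applied to the coarse-space problem (\ref{Initial_Eigen_Problem}), since $\varepsilon_{h_1}(\lambda)=\delta_{h_1}(\lambda)=\delta_H(\lambda)$, giving $\|u-u_{h_1}\|_a\lesssim\delta_H(\lambda)$, the negative-norm estimate from (\ref{Eigenfunction_Error_Nagative}) together with $\eta_a(h_1)=\eta_a(H)$, and the eigenvalue estimate from part (ii).

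For the inductive step, suppose the hypothesis holds at level $k$. Since $(\lambda_{h_{k+1}},u_{h_{k+1}})=\textit{Correction}(V_H,\lambda_{h_k},u_{h_k},V_{h_{k+1}})$, I would apply Theorem~\ref{Error_Estimate_One_Correction_Theorem} with $(\lambda_{h_1},u_{h_1})$ there replaced by $(\lambda_{h_k},u_{h_k})$ and $V_{h_2}$ replaced by $V_{h_{k+1}}$; the hypotheses (\ref{Error_u_h_1})--(\ref{Error_Eigenvalue_h_1}) of that theorem are exactly the induction hypothesis. The conclusion yields
\begin{eqnarray*}
\|u-u_{h_{k+1}}\|_a\lesssim
\eta_a(H)\varepsilon_{h_k}(\lambda)+\varepsilon_{h_k}^2(\lambda)+\delta_{h_{k+1}}(\lambda),
\end{eqnarray*}
together with the corresponding negative-norm and eigenvalue estimates. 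The key algebraic point is then to identify this bound with $\varepsilon_{h_{k+1}}(\lambda)=\sum_{j=1}^{k+1}\eta_a(H)^{k+1-j}\delta_{h_j}(\lambda)$. We have $\eta_a(H)\varepsilon_{h_k}(\lambda)=\sum_{j=1}^k\eta_a(H)^{k+1-j}\delta_{h_j}(\lambda)$, which together with the new term $\delta_{h_{k+1}}(\lambda)$ gives exactly $\varepsilon_{h_{k+1}}(\lambda)$; it remains to absorb the quadratic term $\varepsilon_{h_k}^2(\lambda)$. Here I would use the assumption in step~2 of Algorithm~\ref{Multi_Correction} that $\eta_a(H)\gtrsim\delta_{h_1}(\lambda)\geq\cdots\geq\delta_{h_n}(\lambda)$, which implies $\varepsilon_{h_k}(\lambda)\lesssim\eta_a(H)$ (each term $\eta_a(H)^{k-j}\delta_{h_j}(\lambda)\lesssim\eta_a(H)^{k-j+1}\lesssim\eta_a(H)$ up to a geometric constant), so $\varepsilon_{h_k}^2(\lambda)\lesssim\eta_a(H)\varepsilon_{h_k}(\lambda)$ and the quadratic term is dominated by the already-accounted linear term.

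Finally, the last step of the algorithm is not a full correction step but only the source-problem solve (\ref{aux_problem_h_n}) followed by a Rayleigh quotient. I would treat this separately: the estimate for $\|u-u_{h_n}\|_a$ follows the same argument as in the proof of Theorem~\ref{Error_Estimate_One_Correction_Theorem} up to (\ref{Error_tilde_u_h_2_u})---estimating $\|u_{h_n}-P_{h_n}u\|_a$ via the source equation, the induction hypothesis at level $n-1$, and the negative-norm bound---giving $\|u-u_{h_n}\|_a\lesssim\eta_a(H)\varepsilon_{h_{n-1}}(\lambda)+\varepsilon_{h_{n-1}}^2(\lambda)+\delta_{h_n}(\lambda)\lesssim\varepsilon_{h_n}(\lambda)$ by the same algebra. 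The eigenvalue estimate (\ref{Multi_Correction_Err_eigen}) then follows from Theorem~\ref{Rayleigh_Quotient_error_theorem} with $\psi=u_{h_n}$: the expansion (\ref{rayexpan}) bounds $|\lambda_{h_n}-\lambda|$ by a constant times $\|u-u_{h_n}\|_a^2$ (using the boundedness of $a(\cdot,\cdot)$, the equivalence of $\|\cdot\|$ and $\|\cdot\|_b$ with $\|\cdot\|_a$, and $b(u_{h_n},u_{h_n})\approx1$), which is $\lesssim\varepsilon_{h_n}^2(\lambda)$. The main obstacle I anticipate is purely bookkeeping: making the geometric-sum manipulations for $\varepsilon_{h_{k+1}}(\lambda)$ precise and verifying that the suppressed constants in $\lesssim$ do not accumulate across the $n$ levels in an uncontrolled way---but since $n$ is fixed and the ratio assumption on the $\delta_{h_k}(\lambda)$ controls each step uniformly, this causes no real difficulty.
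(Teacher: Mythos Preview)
Your proposal is correct and follows essentially the same route as the paper's proof: the paper also derives the recursion $\varepsilon_{h_{k+1}}(\lambda)\lesssim\eta_a(H)\varepsilon_{h_k}(\lambda)+\delta_{h_{k+1}}(\lambda)$ from Theorem~\ref{Error_Estimate_One_Correction_Theorem} (absorbing the quadratic term via $\eta_a(H)\gtrsim\delta_{h_k}(\lambda)$), unrolls it to bound $\varepsilon_{h_{n-1}}(\lambda)$, handles the final source-problem step exactly as you do by reusing the first half of the argument in Theorem~\ref{Error_Estimate_One_Correction_Theorem}, and finishes with Theorem~\ref{Rayleigh_Quotient_error_theorem}. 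Your explicit induction framing, including carrying the negative-norm estimate as part of the hypothesis, is a slightly more careful packaging of the same argument.
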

\begin{proof}
From $\eta_a(H)\gtrsim\delta_{h_1}(\lambda)\geq
\delta_{h_2}(\lambda)\geq\cdots\geq \delta_{h_n}(\lambda)$ and
Theorem \ref{Error_Estimate_One_Correction_Theorem}, we have
\begin{eqnarray}
\varepsilon_{h_{k+1}}(\lambda)
&\lesssim&\eta_a(H)\varepsilon_{h_k}(\lambda)+\delta_{h_{k+1}}(\lambda),\
\ \ {\rm for}\ 1\leq k\leq n-2.
\end{eqnarray}
Then by recursive relation, we can obtain
\begin{eqnarray}\label{epsilon_n_1}
\varepsilon_{h_{n-1}}(\lambda)&\lesssim&\eta_a(H)\varepsilon_{h_{n-2}}(\lambda)
+\delta_{h_{n-1}}(\lambda)\nonumber\\
&\lesssim&\eta_a(H)^2\varepsilon_{h_{n-3}}(\lambda)+
\eta_a(H)\delta_{h_{n-2}}(\lambda)+\delta_{h_{n-1}}(\lambda)\nonumber\\
&\lesssim&\sum\limits_{k=1}^{n-1}\eta_a(H)^{n-k-1}\delta_{h_k}(\lambda).
\end{eqnarray}
Based on the proof in Theorem
\ref{Error_Estimate_One_Correction_Theorem} and (\ref{epsilon_n_1}),
the final eigenfunction approximation $u_{h_n}$ has the error
estimate
\begin{eqnarray}\label{Error_u_h_n_Multi_Correction}
\|u_{h_n}-u\|_a&\lesssim&\varepsilon_{h_{n-1}}^2(\lambda)+\eta_a(H)\varepsilon_{h_{n-1}}(\lambda)
+\delta_{h_n}(\lambda)\nonumber\\
&\lesssim& \sum_{k=1}^n\eta_a(H)^{n-k}\delta_{h_k}(\lambda).
\end{eqnarray}
This is the estimate (\ref{Multi_Correction_Err_fun}). From Theorem
\ref{Rayleigh_Quotient_error_theorem} and
(\ref{Error_u_h_n_Multi_Correction}), we can obtain the estimate
(\ref{Multi_Correction_Err_eigen}).
\end{proof}

\section{The application to second order elliptic eigenvalue problem}
In this section, for example, the multi-level correction method
presented in this paper is applied to the second order elliptic
eigenvalue problem. We also discuss two possible ways to implement
the multi-level correction Algorithm \ref{Multi_Correction}. The
first way is the ``two-grid method" of Xu and Zhou introduced and
studied in \cite{XuZhou}. The second one proposed and studied by
Andreev and Racheva in \cite{AndreevLazarovRacheva,RachevaAndreev}
uses the same mesh but higher order finite elements.

In (\ref{weak_problem}), the second order elliptic eigenvalue
problem can be defined by
\begin{eqnarray*}
a(u,v)=\int_{\Omega}\nabla u\cdot\mathcal{A}\nabla vd\Omega, &&
b(u,v)=\int_{\Omega}\rho uvd\Omega,
\end{eqnarray*}
where $\Omega\subset\mathcal{R}^d \ (d=2,3)$ is a bounded domain,
$\mathcal{A}\in \big(W^{1,\infty}(\Omega)\big)^{d\times d}$  a
uniformly positive definite matrix on $\Omega$ and $\rho\in
W^{0,\infty}(\Omega)$ is a uniformly positive function on $\Omega$.
We pose Dirichlet boundary condition to the problem and it means
here $V=H_0^1(\Omega)$ and $W=L^2(\Omega)$. In order to use the
finite element discretization method, we employ the meshes defined
in section 3.

Here, we introduce two ways to implement the multi-level correction
Algorithm \ref{Multi_Correction}. The first way uses finer meshes to
construct the series of finite element spaces. The advantage of this
approach is that it uses the same finite element and does not
require higher regularity of the exact eigenfunctions
(\cite{RachevaAndreev}). The second way is based on the same finite
element mesh but using higher order finite elements. In order to
improve the convergence order, the higher regularity of the exact
eigenfunctions is required.

Let us discuss the methods to construct the series of finite element
spaces $V_{h_k} (k=2,3,\cdots,n)$ for implementing the multi-level
correction method.

{\it Way 1.} (``Multi-grid method"): In this case, $V_{h_k}\
(k=2,3,\cdots,n)$ is the same type of finite element as $ V_{H}$ on
the finer mesh $\mathcal{T}_{h_k}$ with smaller mesh size $h_{k}$.
Here $\mathcal{T}_{h_k}$ is a finer mesh of $\Omega$ which can be
generated by the refinement just as in the multigrid method
(\cite{XuZhou}) from $\mathcal{T}_{h_{k-1}}$ such that
$h_{k}=\eta_a(H)h_{k-1}$. Assume the computing domain $\Omega$ is a
convex domain. Then $\eta_a(H)=\mathcal{O}(H)$ and
$\delta_{h_k}=\mathcal{O}(h_k)=\mathcal{O}(H^k)\ (k=1,2,\cdots, n)$,
and we can obtain the following error estimate for
$(\lambda_{h_n},u_{h_n})$
\begin{eqnarray}
|\lambda-\lambda_{h_n}|&\lesssim&\eta_a(H)^{2n-2}\delta_{H}^2(\lambda)=\mathcal{O}(H^{2n}),\\
\|u-u_{h_n}\|_a&\lesssim&\eta_a(H)^{n-1}\delta_{H}(\lambda)=\mathcal{O}(H^n).
\end{eqnarray}
From the error estimates above, we can find that the multi-level
correction scheme can obtain the accuracy as same as solving the
eigenvalue problem on the finest mesh $\mathcal{T}_{h_n}$. This
improvement costs solving the source problems on the finer finite
element spaces $V_{h_k}\ (k=2,3,\cdots,n)$ and the eigenvalue
problems in coarse spaces $V_{H,h_k}\ (k=2,3,\cdots,n-1)$. This is
better than solving the eigenvalue problem on the finest finite
element space directly, because solving source problem needs much
less computation than solving the corresponding eigenvalue problem.

{\it Way 2.} (``Multi-space method"): In this case, $V_{h_k}$ is
defined on the same mesh $\mathcal{T}_H$ but using higher order
finite element than $V_{h_{k-1}}$. In order to describe the scheme
simply, we suppose the exact eigenfunction has sufficient
regularity. We use the linear finite element space to solve the
original eigenvalue problem (\ref{weak_problem}) on $V_H$, and solve
the source problem (\ref{aux_problem}) in higher order finite
element space with the way that the order of $V_{h_k}$ is one order
higher than $V_{h_{k-1}}$. Then we have the following error estimate
for the final eigenpair approximation $(\lambda_{h_n}, u_{h_n})$
\begin{eqnarray}
|\lambda-\lambda_{h_n}|&\lesssim&\eta_a(H)^{2n-2}\delta_{H}^2(\lambda)=\mathcal{O}(H^{2n}),\\
\|u-u_{h_n}\|_a&\lesssim&\eta_a(H)^{n-1}\delta_H(\lambda)=\mathcal{O}(H^{n}).
\end{eqnarray}
The improved error estimates above just cost solving the source
problems on the same mesh but in higher order finite element spaces
and eigenvalue problems in the lowest order finite element space.

\section{Numerical results}
In this section, we give two numerical examples to illustrate the
efficiency of the multi-level correction algorithm proposed in this
paper. We solve the model eigenvalue problem (\ref{problem}) on the
unit square  $\Omega=(0,1)\times(0,1)$.

\subsection{Multi-space way}
Here we give the numerical results of the multi-level correction
scheme in which the finer finite element spaces are constructed by
improving the finite element orders on the same mesh. We first solve
the eigenvalue problem (\ref{problem}) in linear finite element
space on the mesh $\mathcal{T}_H$. Then do the first correction step
with quadratic element and cubic element for the second one.

Here, we adopt the meshes which are produced by regular refinement
from the initial mesh generated by Delaunay method to investigate
the convergence behaviors. Figure \ref{Initial_Mesh} shows the
initial mesh. Figure \ref{numerical_multi_space_1} %and \ref{numerical_multi_space_2}
gives the corresponding numerical results for the first eigenvalue
$\lambda_1=2\pi^2$ and the corresponding eigenfunction. % second eigenvalue $\lambda_2=5\pi^2$.
\begin{figure}[ht]
\centering
\includegraphics[width=6cm,height=6cm]{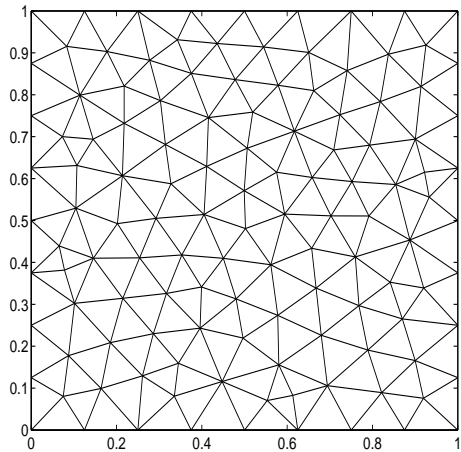}
\caption{Initial mesh for multi-space way} \label{Initial_Mesh}
\end{figure}
\begin{figure}[ht]
\centering
\includegraphics[width=6cm,height=7cm]{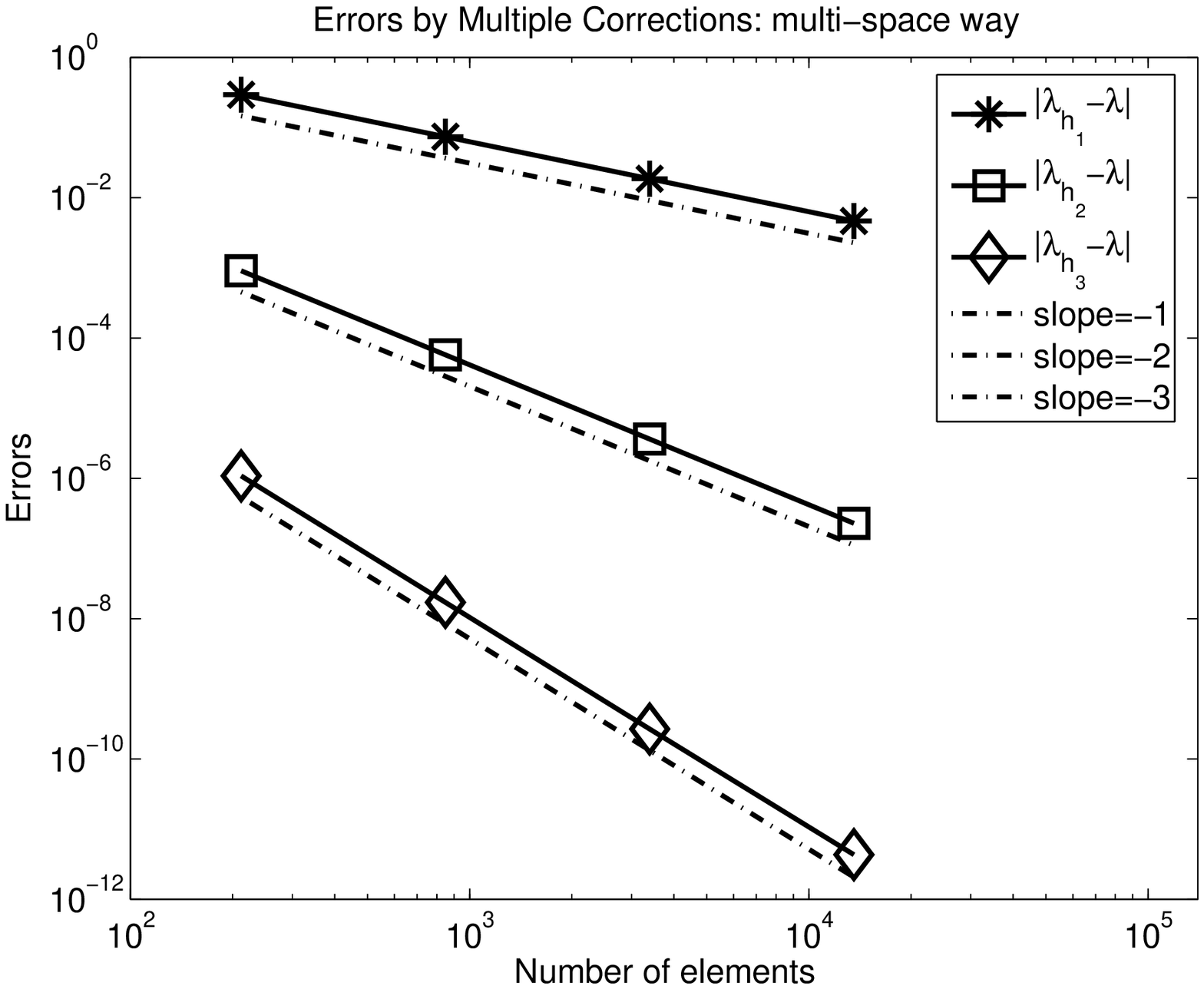}
\includegraphics[width=6cm,height=7cm]{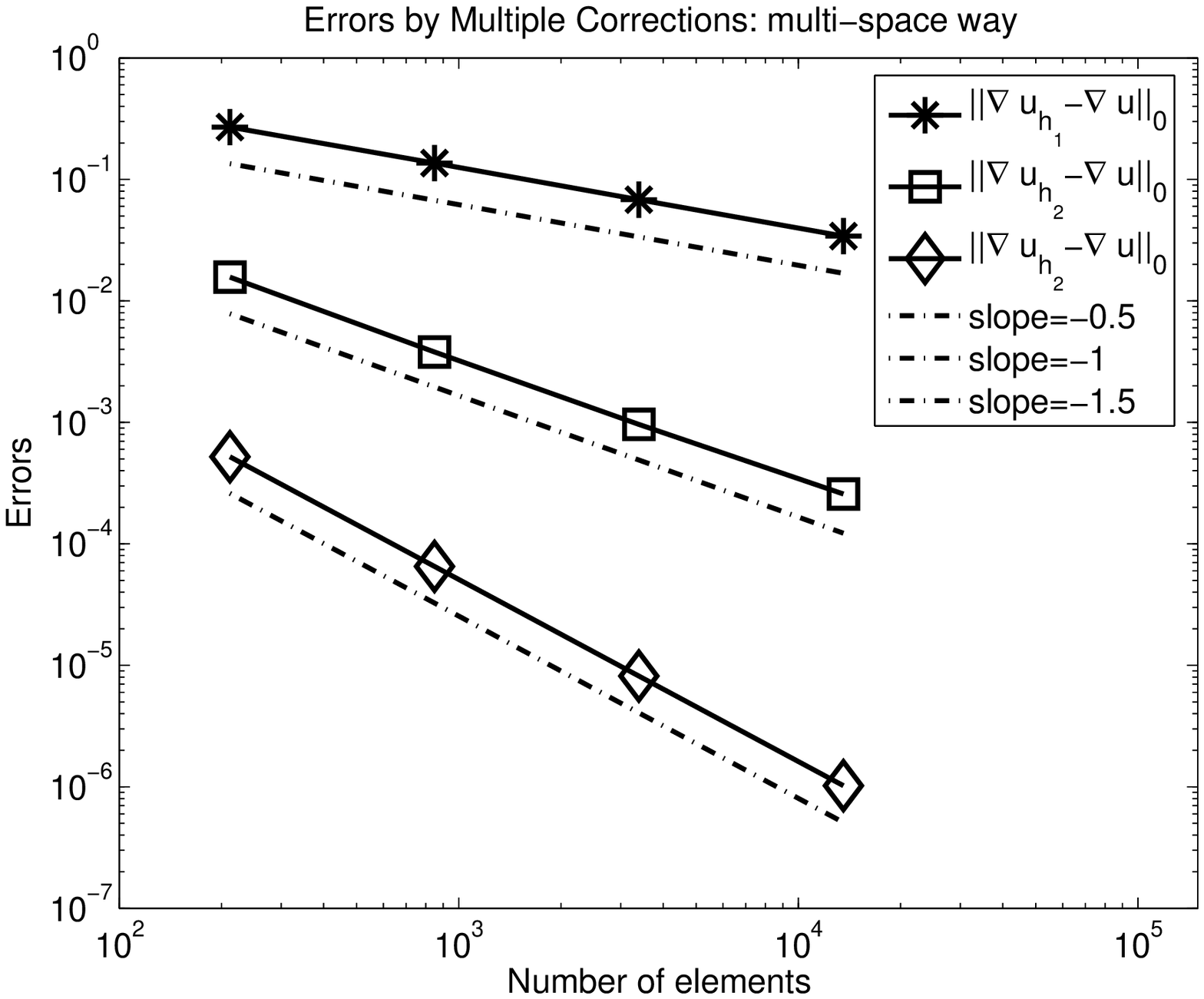}
\caption{The errors for the eigenvalue approximations by multi-level
correction algorithm for the first eigenvalue $2\pi^2$ and the
corresponding eigenfunction with multi-space way}
\label{numerical_multi_space_1}
\end{figure}
%\begin{figure}[ht]
%\centering
%\includegraphics[width=10cm,height=7cm]{Multi_Correction_General_2.ps}
%\caption{The errors for the eigenvalue approximations by multi-level
%correction algorithm for the second eigenvalue $5\pi^2$ with
%multi-space way} \label{numerical_multi_space_2}
%\end{figure}
From Figure \ref{numerical_multi_space_1},
%and\ref{numerical_multi_space_2},
 we can find each correction step can improve the convergence order by two for
 the eigenvalue approximations and one for the eigenfunction approximations
  with multi-space way when the exact eigenfunction is smooth.

\subsection{Multi-grid way}
Here we give the numerical results of the multi-level correction
scheme where the finer finite element spaces are constructed by
refining the existed mesh. We first solve the eigenvalue problem
(\ref{problem}) by linear finite element space on the mesh
$\mathcal{T}_H$. Then refine the mesh by the regular way such that
the size of the resultant mesh $h_k=O(H^k)$ to obtain the mesh
$\mathcal{T}_{h_k}\ (k=2,\cdots,n)$ and solve the auxiliary source
problem (\ref{aux_problem}) in the finer linear finite element space
$V_{h_k}$ defined on $\mathcal{T}_{h_k}$ and the corresponding
eigenvalue problem in $V_{H,h_k}$.

Figure \ref{numerical_multi_grid_1} gives the corresponding
numerical results for the first eigenvalue $\lambda=2\pi^2$ and the
corresponding eigenfunction on the uniform meshes.
\begin{figure}[ht]
\centering
\includegraphics[width=6cm,height=7cm]{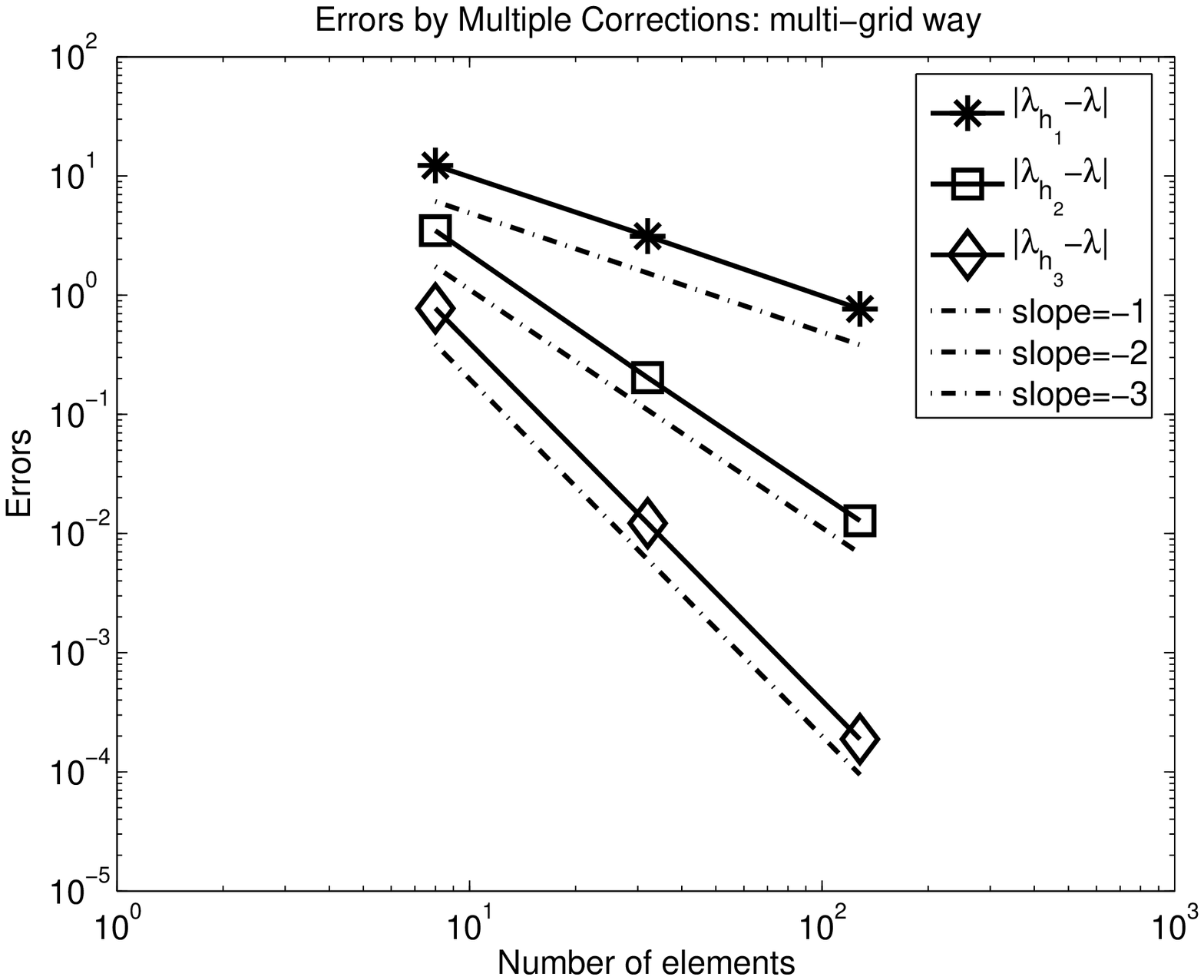}
\includegraphics[width=6cm,height=7cm]{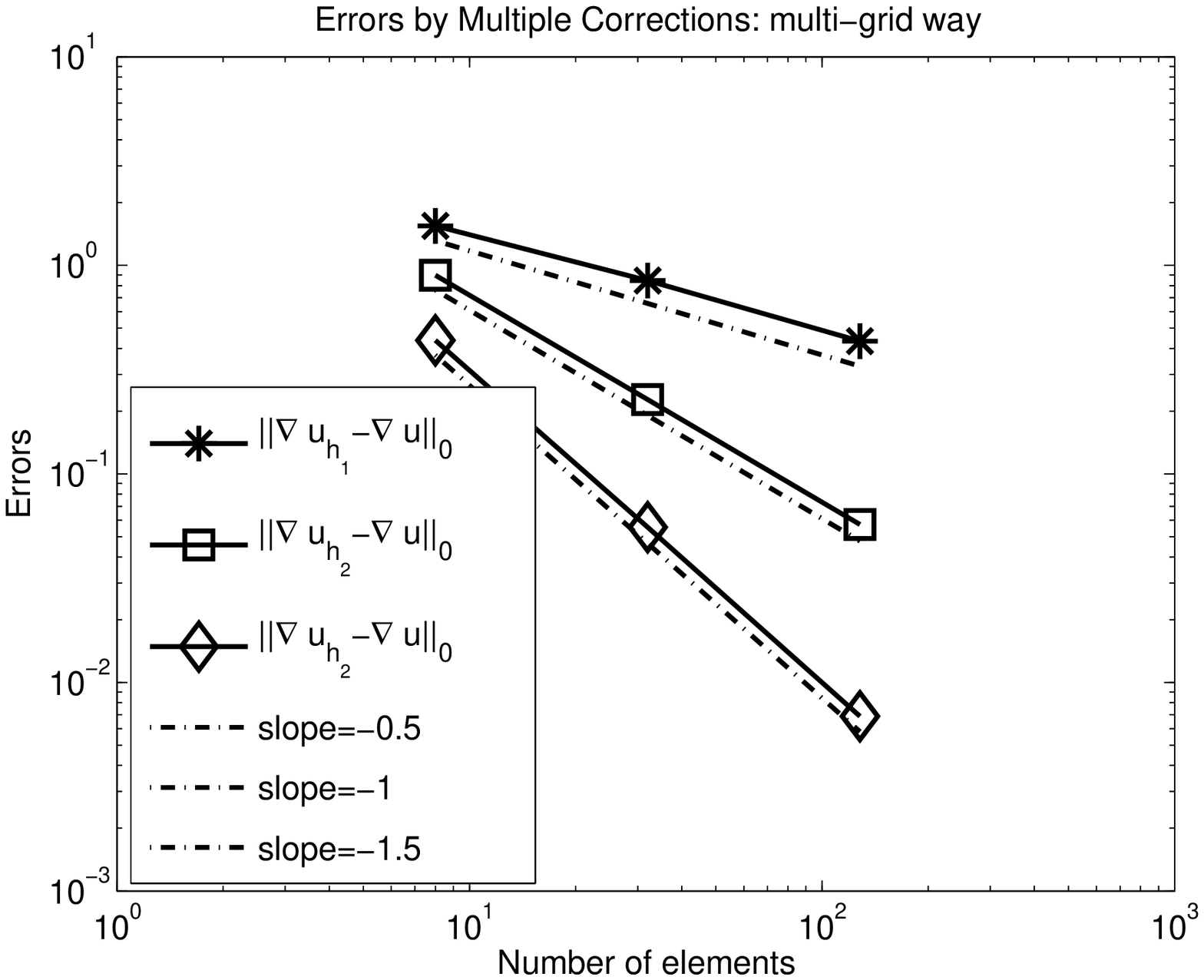}
\caption{The errors for the eigenvalue approximations by multi-level
correction algorithm for the first eigenvalue $2\pi^2$ with
multi-grid way} \label{numerical_multi_grid_1}
\end{figure}
From Figure \ref{numerical_multi_grid_1}, we can also find each
correction step can improve the convergence order by two for the
eigenvalue approximations and one for the eigenfunction
approximations with the multi-grid way.

\section{Concluding remarks}
In this paper, we give a new type of multi-level correction scheme
to improve the accuracy of the eigenpair approximations.  We can use
the better eigenvalue and eigenfunction approximation
$(\lambda_{h_n}, u_{h_n})$ to construct an a posteriori error
estimator of the eigenpair approximation for the eigenvalue problem
(\cite{ChenLin,LinXie}).

Furthermore, our multi-level correction scheme can be coupled with
the multigrid method to construct a type of multigrid and parallel
method for eigenvalue problems (\cite{XuZhou_Eigen}). It can also be
combined with the adaptive refinement technique for the singular
 eigenfunction cases. These will be our future work.

\bibliographystyle{amsplain}

\end{document}